\documentclass[11pt]{article}
\usepackage[margin=1in]{geometry}

\usepackage{amsmath, amssymb, amsthm, bm, mathrsfs}
\usepackage{bigints}
\usepackage{mathtools}

\usepackage{graphicx}
\usepackage{caption}
\usepackage{subcaption}
\usepackage{tikz}
\usetikzlibrary{trees, positioning}
\usepackage{float}

\usepackage{booktabs}
\usepackage{multirow, multicol}
\usepackage[inline, shortlabels]{enumitem}

\usepackage{algorithm}
\usepackage{algpseudocode}

\usepackage{natbib}
 \bibpunct[, ]{(}{)}{,}{a}{}{,}%
\usepackage{fix-cm}
\usepackage{indentfirst}
\usepackage{xcolor}
\usepackage[mathscr]{eucal}
\usepackage[none]{hyphenat}

\pgfdeclarepatternformonly{stripes}
{\pgfpointorigin}{\pgfpoint{0.25cm}{0.25cm}}
{\pgfpoint{0.25cm}{0.25cm}}{
  \pgfpathmoveto{\pgfpoint{0cm}{0cm}}
  \pgfpathlineto{\pgfpoint{0.25cm}{0.25cm}}
  \pgfpathlineto{\pgfpoint{0.25cm}{0.12cm}}
  \pgfpathlineto{\pgfpoint{0.12cm}{0cm}}
  \pgfpathclose\pgfusepath{fill}
  \pgfpathmoveto{\pgfpoint{0cm}{0.12cm}}
  \pgfpathlineto{\pgfpoint{0cm}{0.25cm}}
  \pgfpathlineto{\pgfpoint{0.12cm}{0.25cm}}
  \pgfpathclose\pgfusepath{fill}
}

\newcommand{\bc}{\begin{center}}
\newcommand{\ec}{\end{center}}
\newcommand{\beq}{\begin{equation}}
\newcommand{\eeq}{\end{equation}}
\newcommand{\beqa}{\begin{eqnarray*}}
\newcommand{\eeqa}{\end{eqnarray*}}

\newcommand{\vect}[1]{\boldsymbol{#1}}
  
\newcommand{\Pb}{\mbox{{I}\kern-.1667em\mbox{P}}}  
\newcommand{\Ex}{\mbox{{I}\kern-.1667em\mbox{E}}}
\newcommand{\Real}{\mbox{{I}\kern-.1667em\mbox{R}}}
\newcommand{\NIW}{\mathcal{NIW}}
\newcommand{\MVT}{\mathcal{MVT}}
\newcommand{\Beta}{\text{Beta}}

\newtheorem{theorem}{Theorem}

\theoremstyle{definition}

\DeclareMathOperator{\etr}{etr}

\title{Adversarial Graph Traversal}

\author{
David Banks\thanks{Department of Statistical Science, Duke University.}
\and
Elvan Ceyhan\thanks{Department of Mathematics and Statistics, Auburn University. Email: ceyhan@auburn.edu}
\and
Leah Johnson\thanks{Department of Statistical Science, Duke University.}
\and
Li Zhou\thanks{Department of Mathematics and Statistics, Auburn University.}
}

\date{}  

\begin{document}

\maketitle

\begin{abstract}
Suppose a Bayesian agent seeks to traverse a graph.
Each time she crosses an edge, she pays a price.
The first time she reaches a node, there is a payoff.
She has an opponent who can reduce the payoffs.
This paper uses adversarial risk analysis to find a solution to 
her route selection problem.
It shows how the traveler is advantaged by having an
accurate subjective distribution over the costs/payoffs
and by having a Bayesian prior for her opponent's strategic
choices.
The results are relevant to military convoy routing, corporate
competition, and certain games.
\end{abstract}

\noindent\textbf{Keywords:} 
Adversarial Risk Analysis, Bayesian Optimization, Convoy Routing, Level-$k$ Thinking, Multivariate $t$-Distribution

\section{Introduction}
\label{sec:intro}

Consider a traveler (female) who is moving through a 
network.
She knows her current location and the configuration of the network.
Each time she crosses an edge, she pays a cost.
The first time she reaches a node, she receives a payoff.
She does not know the costs and payoffs in advance, but she has a 
subjective distribution over those.
Her goal is to maximize her net revenue, so she terminates 
when the expected value of further travel becomes negative.

The non-adversarial version of this problem has been studied by 
\citet{caballero2025bayesian}.
It represents a general class of sequential decision problems which 
applies 
to situations such as portfolio investment over time or choosing a series 
of jobs over the course of one's career.

This paper extends that analysis to the case in which the traveler has an 
opponent (male) who seeks to minimize her reward and who is able to reduce
the payoff at a neighboring node (only once---he cannot perturb the
same node's payoff twice). 
We employ adversarial risk analysis (ARA) to let the traveler 
model her opponent's decision making \citep{rios2012adversarial}.
This problem arises in many scenarios; e.g., Prussian kriegsspiel
\citep{wintjes2021school}, or terrorists who initiate a sequence of
attacks upon infrastructure 
\citep{zhuang2010modeling, dubois2023interdicting}.
It also arises in the context of corporate competition, when two firms 
make a series of decisions over time, each trying to maximize its return
at the expense of its opponent.

The running example in this paper is convoy routing during an urban 
insurgency.
This problem was previously considered in \citet{wang2011network}.
In that paper, the ARA model was the mirroring equilibrium
solution concept \citep[cf.][section 2.2.4]{banks2015adversarial}.
In this paper, we extend that work by using the level-$k$ thinking 
solution concept \citep{stahl1995players} with a Bayesian prior over $k$.

The adversarial framework can be developed in several interesting ways.
The traveler may consider a variety of models---potentially even a 
probabilistic mixture thereof---for how her opponent thinks.
She could suppose that her opponent adopts similar models of her own
planning.
Her opponent may know the costs and payoffs, or only have subjective 
beliefs about those.
And there could be technical constraints on how her opponent can manipulate
the payoffs.
Most of these scenarios have plausible analogies in real applications.
However, in this paper, we adopt the simplest formulation 
that fully exercises the issues that arise in ARA for graph traversal.

Specifically, in our framework we assume the opponent has perfect 
knowledge of the costs and payoffs.
The traveler does not know these, but she has a subjective distribution 
over them.
And we examine two models for how the traveler conceptualizes her 
opponent's strategy:
he may greedily reduce the highest payoff among her neighboring nodes
(Type 0), or he may reduce the
second highest payoff (Type 1), since he assumes that she will anticipate 
a greedy reduction and thus choose the second-best path.
(These types correspond to the level-$k$ thinking model for her opponent, with $k$ equal to 0 or 1; see
\citet[][section 2.2.3]{banks2015adversarial} and \citet{stahl1995players} for an extended discussion.)

In the example of convoy routing, the commander's goal is to support
troops or mission goals at different intersections in a city road 
network during an insurgency.
The fog of war prevents her from knowing how much value her support can 
provide at a particular intersection, but she has a subjective
distribution on those values.
Similarly, she does not know the cost (in terms of damage from insurgents
or time/fuel/management resources) of traversing a road to reach an
intersection, but she has a subjective distribution over those, too.
Her opponent, the leader of the insurgency, knows exactly how many
combatants he has at each intersection and must guess her next 
destination in order to deploy additional forces so as to lower her
effectiveness.

Section~\ref{sec:optimization-intro} describes the optimization problem in the case without an adversary, and then extends it to cases with a Type~0 opponent or a 
Type~1 opponent.
Section~\ref{search}
uses a designed experiment to study the effects of adversary type, three different search heuristics,
and three different models for the traveler's subjective beliefs (pessimistic, accurate, and optimistic).
Section~\ref{sec:conclusion} summarizes the results.
The Appendix provides proofs and derivations.

\section{The Optimization Problem}
\label{sec:optimization-intro}

In this section, we first consider a simplified scenario in which the traveler faces no adversary.
We then show how the traveler modifies her strategy to account for an opponent.
The traveler's decisions will depend upon two theorems that are proved in the appendix.

\begin{theorem}
\label{thm:NIW-conditioning}
\emph{The Normal-Inverse-Wishart ($\NIW$) distribution is closed under conditioning.}  
\end{theorem}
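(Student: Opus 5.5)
We read the statement as follows: if $(\bm\mu,\Sigma)\sim\NIW(\bm\mu_0,\kappa_0,\Psi,\nu)$ on $\Real^p\times$ (positive definite matrices), that is, $\Sigma\sim\mathcal{IW}(\Psi,\nu)$ and $\bm\mu\mid\Sigma\sim N(\bm\mu_0,\Sigma/\kappa_0)$, and we partition $\bm\mu=(\bm\mu_1,\bm\mu_2)$ into blocks of sizes $q$ and $p-q$, then the conditional joint law of the parameters attached to block~2 (its mean and the conditional covariance) given those of block~1 is again $\NIW$. The data-level version (observing $\bm x_1$ of a vector $\bm x\sim N(\bm\mu,\Sigma)$ and updating the law of the remaining components) then follows from the same computation combined with standard conjugacy. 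We plan to work entirely with Bartlett-type block decompositions, because inverse-Wishart matrices factor cleanly under them.

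First, partition $\Sigma$ and $\Psi$ conformably. Then define $\Sigma_{11}$, the regression coefficient $B=\Sigma_{21}\Sigma_{11}^{-1}$, and the Schur complement $\Sigma_{2\cdot1}=\Sigma_{22}-\Sigma_{21}\Sigma_{11}^{-1}\Sigma_{12}$. We will invoke the standard inverse-Wishart partition theorem, which we prove via the Jacobian of the map $\Sigma\mapsto(\Sigma_{11},B,\Sigma_{2\cdot1})$ if it is not to be assumed. It says:
\begin{itemize}
\item $\Sigma_{2\cdot1}\sim\mathcal{IW}(\Psi_{2\cdot1},\nu)$, independent of $(\Sigma_{11},B)$;
\item $B\mid\Sigma_{2\cdot1}\sim$ matrix normal with mean $\Psi_{21}\Psi_{11}^{-1}$ and row/column covariances $\Sigma_{2\cdot1}$ and $\Psi_{11}^{-1}$;
\item $\Sigma_{11}\sim\mathcal{IW}(\Psi_{11},\nu-(p-q))$.
\end{itemize}

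Second, handle the mean. Given $\Sigma$, the Gaussian conditional gives
\[
\bm\mu_2\mid\bm\mu_1,\Sigma\sim N\bigl(\bm\mu_{0,2}+B(\bm\mu_1-\bm\mu_{0,1}),\,\Sigma_{2\cdot1}/\kappa_0\bigr).
\]
Multiplying the densities, the joint density factors into three pieces. The first is a term in $(\bm\mu_1,\Sigma_{11})$ alone, which is $\NIW$ in those. The second is the matrix-normal term in $B$. The third is the pair (inverse-Wishart in $\Sigma_{2\cdot1}$) $\times$ (normal in $\bm\mu_2$ with covariance proportional to $\Sigma_{2\cdot1}$). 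Conditioning on block~1, meaning on $(\bm\mu_1,\Sigma_{11},B)$, leaves exactly the last two factors. They form an $\NIW$ law with updated parameters: location $\bm\mu_{0,2}+B(\bm\mu_1-\bm\mu_{0,1})$, scale $\kappa_0$, matrix $\Psi_{2\cdot1}$, and degrees of freedom $\nu$. If instead we condition only on $(\bm\mu_1,\Sigma_{11})$, we must integrate out $B$. The Gaussian term in $B$ then combines with the location term, which shifts the scale matrix to $\Psi_{2\cdot1}+\kappa'(\dots)$, a rank-update form. The result stays $\NIW$ after we complete the square in $B$.

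The main obstacle is this last integration. Completing the square for $B$ couples the mean location and the matrix $\Psi_{2\cdot1}$, so the bookkeeping for the updated $\kappa$, $\nu$ and scale must be done carefully to confirm closure rather than a mixture. We would first attempt it using the vectorized form of $B$ together with the identity $\mathrm{tr}(A X^\top C X)=\mathrm{vec}(X)^\top(A\otimes C)\mathrm{vec}(X)$, and then verify the normalizing constants match those of the $\NIW$ density.
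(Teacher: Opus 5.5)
Your skeleton is the same as the paper's: the Gaussian conditional at fixed $\Sigma$, plus the inverse-Wishart law of the Schur complement. Tracking the regression coefficient $B$ explicitly is exactly what the paper's ``combining these facts'' step leaves out. The problem is that the independence claim in your first bullet is false, and the error propagates. From $d\Sigma=|\Sigma_{11}|^{p-q}\,d\Sigma_{11}\,dB\,d\Sigma_{2\cdot1}$, $|\Sigma|=|\Sigma_{11}|\,|\Sigma_{2\cdot1}|$ and
\[
\mathrm{tr}(\Psi\Sigma^{-1})=\mathrm{tr}(\Psi_{11}\Sigma_{11}^{-1})+\mathrm{tr}\bigl\{\Sigma_{2\cdot1}^{-1}\bigl[\Psi_{2\cdot1}+(B-B_0)\Psi_{11}(B-B_0)^\top\bigr]\bigr\},\qquad B_0=\Psi_{21}\Psi_{11}^{-1},
\]
the density factors as (function of $\Sigma_{11}$)$\times$(function of $(B,\Sigma_{2\cdot1})$). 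So the correct statement is $\Sigma_{11}\perp(B,\Sigma_{2\cdot1})$. $\Sigma_{2\cdot1}$ is \emph{not} independent of $B$; your own second bullet gives $B$ row covariance $\Sigma_{2\cdot1}$. Hence, when you condition on $(\bm\mu_1,\Sigma_{11},B)$, the matrix-normal factor $|\Sigma_{2\cdot1}|^{-q/2}\etr\{-\tfrac12\Sigma_{2\cdot1}^{-1}(B-B_0)\Psi_{11}(B-B_0)^\top\}$ does not drop out. It merges with the inverse-Wishart term. With $\bm d=\bm\mu_1-\bm\mu_{0,1}$, the conditional law is $\NIW\bigl(\bm\mu_{0,2}+B\bm d,\,\kappa_0,\,\Psi_{2\cdot1}+(B-B_0)\Psi_{11}(B-B_0)^\top,\,\nu+q\bigr)$, not the parameters you state. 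Closure survives in that case, but the case as written is wrong.

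More importantly, that is not the case the theorem needs, and the case it does need is the one you leave as a plan. The paper's object is $\mathbf{X}\mid\Sigma\sim N(\bm\mu,\Sigma)$, $\Sigma\sim\mathcal{IW}(\Psi,\nu)$. This is your $(\bm\mu,\Sigma)$ with $\bm\mu$ read as the data vector and $\kappa_0=1$. The traveler conditions on observed values only, never on $\Sigma_{11}$ or $B$. So the theorem \emph{is} your second case, and no conjugacy step is involved.

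Finishing it needs two things:
\begin{itemize}
\item The corrected independence, so that $\Sigma_{11}$ integrates out and conditioning on $(\bm\mu_1,\Sigma_{11})$ is the same as conditioning on $\bm\mu_1$ alone.
\item A direct computation for $B$, with no completing of squares. Since $B\mid\Sigma_{2\cdot1}$ has row covariance $\Sigma_{2\cdot1}$ and column covariance $\Psi_{11}^{-1}$, you get $B\bm d\mid\Sigma_{2\cdot1}\sim N\bigl(B_0\bm d,(\bm d^\top\Psi_{11}^{-1}\bm d)\,\Sigma_{2\cdot1}\bigr)$.
\end{itemize}
That covariance is \emph{proportional to} $\Sigma_{2\cdot1}$, which is exactly what rules out a mixture. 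It gives $\bm\mu_2\mid\bm\mu_1,\Sigma_{2\cdot1}\sim N(\bm\mu_{0,2}+B_0\bm d,\Sigma_{2\cdot1}/\kappa')$ with $1/\kappa'=1/\kappa_0+\bm d^\top\Psi_{11}^{-1}\bm d$, while $\Sigma_{2\cdot1}\mid\bm\mu_1\sim\mathcal{IW}(\Psi_{2\cdot1},\nu)$ is unchanged. Only $\kappa$ is updated; the inverse-Wishart parameters are not. For $\kappa_0=1$, absorbing $1/\kappa'$ into the covariance gives the paper's $\kappa$-free form, with scale $(1+\bm d^\top\Psi_{11}^{-1}\bm d)\,\Psi_{2\cdot1}$ and $\nu$ degrees of freedom. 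The conditional $\MVT$ confirms these parameters. Do not adjust them to the paper's stated $(\Psi_{22\mid1},\nu-k)$, which are not what the argument produces.
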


\noindent
That is, if the traveler's subjective belief about the joint costs and payoffs has the $\NIW$ distribution,
then when she observes data, her new subjective belief is also $\NIW$.

\begin{theorem}
\label{thm:MVT-marginal}
\emph{Integrating out the precision matrix in the $\NIW$ distribution 
yields the multivariate $t$-distribution ($\MVT$).}  
\end{theorem}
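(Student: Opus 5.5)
We will work with the standard parametrization in which $(\bm\mu,\bm\Lambda)\sim\NIW(\bm\mu_0,\kappa_0,\nu_0,\bm\Psi_0)$ on $\Real^p$. Here $\bm\Lambda^{-1}=\bm\Sigma\sim\mathcal{W}^{-1}(\bm\Psi_0,\nu_0)$ and $\bm\mu\mid\bm\Sigma\sim N(\bm\mu_0,\bm\Sigma/\kappa_0)$. The quantity of interest is an observable vector $\bm x\mid\bm\mu,\bm\Sigma\sim N(\bm\mu,\bm\Sigma)$, whose predictive distribution is obtained after integrating out $\bm\mu$ and then the precision $\bm\Lambda$. The statement should also cover the marginal of $\bm\mu$ itself; that is the same calculation with $\kappa_0$ in place of $\kappa_0/(1+\kappa_0)$.

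\textbf{Step 1: integrate out the mean.} Conditional on $\bm\Sigma$, convolving two Gaussians gives $\bm x\mid\bm\Sigma\sim N\bigl(\bm\mu_0,\tfrac{1+\kappa_0}{\kappa_0}\bm\Sigma\bigr)$. This reduces the problem to a normal vector with a fixed mean and a covariance proportional to an inverse-Wishart matrix.

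\textbf{Step 2: integrate out the precision.} Write $c=(1+\kappa_0)/\kappa_0$ and $\bm d=\bm x-\bm\mu_0$. We multiply the Gaussian density by the inverse-Wishart density and integrate over $\bm\Sigma>0$. The integrand is proportional to
\[
|\bm\Sigma|^{-(\nu_0+p+2)/2}\etr\!\Bigl(-\tfrac12\bigl(\bm\Psi_0+c^{-1}\bm d\bm d^\top\bigr)\bm\Sigma^{-1}\Bigr).
\]
This is an unnormalized inverse-Wishart kernel with $\nu_0+1$ degrees of freedom and scale matrix $\bm\Psi_0+c^{-1}\bm d\bm d^\top$. Its integral therefore equals the reciprocal normalizing constant, which is proportional to
\[
\bigl|\bm\Psi_0+c^{-1}\bm d\bm d^\top\bigr|^{-(\nu_0+1)/2}.
\]
The gamma-function ratio $\Gamma_p\bigl(\tfrac{\nu_0+1}{2}\bigr)/\Gamma_p\bigl(\tfrac{\nu_0}{2}\bigr)$ collapses to $\Gamma\bigl(\tfrac{\nu_0+1}{2}\bigr)/\Gamma\bigl(\tfrac{\nu_0-p+1}{2}\bigr)$.

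\textbf{Step 3: simplify the determinant.} By the matrix determinant lemma,
\[
|\bm\Psi_0+c^{-1}\bm d\bm d^\top|=|\bm\Psi_0|\,\bigl(1+c^{-1}\bm d^\top\bm\Psi_0^{-1}\bm d\bigr).
\]
Set $\nu=\nu_0-p+1$ and $\bm S=\frac{c}{\nu}\bm\Psi_0$. The resulting density is proportional to $\bigl(1+\tfrac1\nu\bm d^\top\bm S^{-1}\bm d\bigr)^{-(\nu+p)/2}$, which is the $\MVT_\nu(\bm\mu_0,\bm S)$ kernel. The last check is that the collected constants $\pi^{-p/2}$, $|\bm\Psi_0|^{-1/2}$, $c^{-p/2}$ and the gamma ratio match the $\MVT$ normalizing constant $\Gamma\bigl(\tfrac{\nu+p}{2}\bigr)/\bigl[\Gamma\bigl(\tfrac\nu2\bigr)(\nu\pi)^{p/2}|\bm S|^{1/2}\bigr]$.

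The main obstacle is bookkeeping rather than ideas. Two points need care: the multivariate gamma ratio must collapse correctly to a single-gamma ratio, and the parametrization conventions (precision versus covariance, Wishart versus inverse-Wishart degrees of freedom) must be stated explicitly. In particular, the condition $\nu_0>p-1$ is what makes the resulting degrees of freedom $\nu_0-p+1$ positive.

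If the paper instead parametrizes by the precision, $\bm\Lambda\sim\mathcal{W}$, we would change variables $\bm\Sigma=\bm\Lambda^{-1}$ and carry out the same computation as a Wishart integral. This gives the identical $t$ result.
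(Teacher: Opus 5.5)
Your proof is correct and follows essentially the paper's route: multiply the Gaussian density by the inverse-Wishart density, recognize the product as a kernel with $\nu+1$ degrees of freedom and rank-one-augmented scale matrix, apply the matrix determinant lemma, and reparametrize to $\nu-p+1$ degrees of freedom with scale $\boldsymbol{\Psi}/(\nu-p+1)$. The differences are minor. The paper treats the mean as fixed, which is your case $c=1$, so your Gaussian convolution step is an extra generalization. It substitutes $\mathbf{S}=\boldsymbol{\Sigma}^{-1}$ and recognizes a Wishart rather than an inverse-Wishart kernel. It also works only up to proportionality, whereas you additionally check that the constants, including the telescoping multivariate gamma ratio, match the $\MVT$ normalizer.
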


\noindent
Both results are needed for the traveler to determine the expected value 
of her next traversal.

\subsection{Two Models}

The traveler needs two models.
One describes the payoffs and costs of travel, and the other
describes the decision-making of her opponent.

Her subjective belief about the payoffs and costs is the
$\NIW$ distribution.
As she observes values, she conditions on these, which changes
her beliefs about unobserved values.
Note that this conditioning is not the same as Bayesian updating.

Regarding her opponent, the traveler believes he is either
nonstrategic, Type 0, reducing the payoff at the adjacent node with
largest value, or strategic, Type 1, reducing the payoff at the adjacent
node that has the second-largest value.
The Type 1 opponent assumes the 
traveler expects him to be nonstrategic and so she will
avoid the node that a Type 0 player would reduce.
This framework corresponds to the opponent being a level-$k$
thinker with $k$ equal to 0 or 1 \citep{stahl1995players}.

The traveler's model for her opponent's type is Bayesian, with a 
Beta prior on him being nonstrategic.
As she observes payoffs during her traversals, she gets information
about the type of the opponent she faces, and she updates her
prior accordingly.

These two models interact because the traveler compares the 
payoff she expects to the payoff she receives, which provides
probabilistic information on whether she has a strategic or
nonstrategic opponent.
If her model for the payoffs is inaccurate, or if her prior
on the opponent is weak, or both, then she will learn slowly.
More detailed model specification is given in subsequent sections.

\subsection{Navigation in the Absence of an Adversary}
\label{sec:non-adversarial}

As the traveler navigates the graph, she observes edge costs and node payoffs.
Initially, she starts with an $\NIW$ subjective belief over all unobserved components. 
When data (i.e., the cost of a newly traversed edge or a node payoff) 
become available, she conditions her belief and obtains a new distribution on the remaining unobserved components.

The problem of finding an optimal path given uncertain costs and rewards 
is NP-hard, as shown in \citet{caballero2025bayesian}. 
Thus, even without an adversary, 
the traveler must balance the computational burden of searching 
many potential routes 
against the benefits of locating paths with high net expected values.

Our analysis assumes she uses an Uncertainty Policy: 
the extent of her search depends on how much her 
observations have changed her means for the unobserved components.
If recent observations substantially change her beliefs about
the unobserved components, then she should not evaluate 
long paths---there is too much uncertainty.
But if her beliefs about the unseen components did not change very 
much, then she can afford to consider longer paths that may lead 
to regions with large payoffs and low costs.

Specifically, suppose the traveler recalculates the conditional means 
for each unvisited edge or node after observing new data. 
Let $\tau$ be the absolute value of the maximum relative (percentage) 
change across these recalculated means, compared to her previous means. 
We adopt the following search strategy:
\begin{itemize}
\item If $\tau > 20\%$, the traveler restricts her evaluation to the 
immediate neighbors of her current node. 
\item If $10\% < \tau \leq 20\%$, the traveler expands her search to all 
two-node paths from her 
current node.
\item If $5\% < \tau \leq 10\%$, the traveler searches all three-node paths from her current node. 
\item If $\tau \leq 5\%$, the traveler is confident that the new 
information has not significantly changed 
her expectations, so she searches four-node paths from her current node.
\end{itemize}
These cutpoints were determined empirically for the road network 
used in our simulation.
For graphs with different connectivity, different cutpoints may be
better.

For some networks, the number of paths considered in this 
uncertainty policy could be so large as to be a computational
barrier.
In that case the analyst should choose some maximum number of
paths to consider and greedily sample those. 

\subsection{Navigation Against an Adversary}
\label{sec:adversarial}

We now extend the traveler's problem to account for an opponent who is 
able to reduce the payoff at one neighboring node by a fixed, known 
quantity at each turn (the same node's payoff may not 
be reduced twice). 
In our example the reduction is 30\%, or $\delta = 0.3$,
but one could use different values
for different nodes or place a subjective prior over the amounts of the 
reductions.

In the simplest model, the adversary knows the underlying true costs and 
payoffs, but the traveler only has a subjective distribution over these 
values. 
When she first navigates to a new node, she observes both the edge cost
and node payoff. 
If the observed payoff is very different from her expectation, then either 
her belief is wrong or her opponent has changed the payoff. 

The traveler believes the adversary is either Type~0, who reduces 
the largest payoff, or Type~1, who strategically reduces the second-best 
payoff, thinking the traveler expects him to penalize her best path.
If the reduced net payoff for the best node is larger than the net 
payoff for the second-best node, then the Type~1 opponent reduces the
payoff at the best node, so as to minimize her gain.

In the context of the convoy example, this structure implies that
the insurgent leader knows how many combatants he has placed at
each intersection in the road network, and he must guess where the
convoy commander will move in order to allocate his reinforcements,
which lower her effectiveness (i.e., her payoff).
The insurgent leader does not know where she will move, but he
may be a Type~0 thinker, and send reinforcements where they will
have the most value, or he may be a Type~1 thinker, and guess that the
convoy commander will anticipate a myopic choice and thus navigate
to the second-best position.
The convoy commander does not know what type of thinker her opponent
is, but she is Bayesian and she has a Beta($\alpha, \beta)$ prior on 
$\pi$, the probability that the adversary is Type~0. 

Upon her first traversal, she observes an edge cost and node payoff. 
There are four situations, based on the type of her opponent and  
possible perturbation of her node reward: 
\begin{enumerate}
    \item Her opponent is Type~0, and has reduced her payoff.  
    \item Her opponent is Type~0, and has not reduced her payoff. 
    \item Her opponent is Type~1, and has reduced her payoff. 
    \item Her opponent is Type~1, and has not reduced her payoff. 
\end{enumerate}
We shall consider each case separately.
Our example in Section~\ref{search} assumes her graph is a two-dimensional
$6 \times 6$ grid, so each node has at most four neighbors---for 
brevity and clarity, our discussion addresses that case. 
However, the reasoning generalizes immediately to any other 
(connected) network.

The traveler's $\NIW$ distribution allows her to calculate
her probability 
that each of her neighboring node payoffs would be reduced 
by a Type~0 or Type~1 adversary.
She can also calculate her probability of obtaining a node payoff 
of the size that she observed if the chosen node payoff was reduced 
and if that payoff was not reduced.
From this, she finds her posterior probability that her opponent is 
Type~0.
This probability allows her to condition on her observation to
create a new subjective distribution that is a mixture of $\NIW$
distributions.
Each time she reaches a new node, the number of components in her 
mixture doubles, creating a computational challenge in large 
networks.
One solution is to regularize by eliminating components with low 
weights.

Specifically, for each of the up-to-four neighboring nodes, 
let $\Delta_i$ for $i=1, 2, 3, 4$ be the 
corresponding payoff minus the edge cost for the $i$th neighboring 
node.
The $\MVT$ distribution allows her to calculate the probabilities 
of 24 quantities:
$\Pb(\Delta_1 > \Delta_2 > \Delta_3 > \Delta_4)$, and so forth, for 
all 24 permutations of the four indices.

Summing $\Pb(\Delta_1 > \Delta_2 > \Delta_3 > \Delta_4)$, 
$\Pb(\Delta_1 > \Delta_2 > \Delta_4 > \Delta_3)$,
$\Pb(\Delta_1 > \Delta_3 > \Delta_2 > \Delta_4)$, $\Pb(\Delta_1 > 
\Delta_3 > \Delta_4 > \Delta_2)$,
$\Pb(\Delta_1 > \Delta_4 > \Delta_2 > \Delta_3)$ and $\Pb(\Delta_1 
> \Delta_4 > \Delta_3 > \Delta_2)$
gives the probability $q_{11}$ that moving to the first node is 
the node with highest net expected value.
A slightly more involved calculation finds the probability $q_{21}$ 
that moving to the first node is the second-best choice (the best 
choice given that the true best choice has been perturbed)
and the probability $1 - q_{11} - q_{21}$ that moving to the first 
node is neither the best nor 
second-best choice.
She can also calculate these probabilities for each of the other 
up-to-three neighboring nodes, with notation $q_{1i}$ and $q_{2i}$.

When the traveler moves to node $i$, she observes an edge cost and 
a payoff.
With probability $\pi q_{1i}$ her opponent is Type~0 and has 
reduced that payoff.
With probability $(1-\pi) q_{1i}$ her opponent is Type~1 and did 
not reduce the payoff.
With probability $\pi q_{2i}$ her opponent is Type~0 and did not 
reduce the payoff.
With probability $(1-\pi) q_{2i}$ her opponent is Type~1 and did 
reduce the payoff.
The traveler then calculates her posterior probability that the 
payoff at node $i$ was reduced.

Her posterior probability that the $i$th node's payoff was 
reduced given the observed payoff $x$ is 
$$
\resizebox{1.0\columnwidth}{!}{$\displaystyle
\begin{aligned}
\Pb(\mbox{reduced } | \, x) &= \\
&\frac{f(x \, | \mbox{ reduced}) [\pi q_{1i} + (1-\pi) q_{2i}]}
{f(x \, | \mbox{ reduced}) [\pi q_{1i} + (1-\pi) q_{2i}] 
+f(x \, | \mbox{ not reduced}) [(1-\pi)q_{1i} + \pi q_{2i}]}
\end{aligned}
$}
$$
where $f(x \, | \mbox{ not reduced})$ is the marginal of the 
mixture-$\MVT$ induced by her mixture-$\NIW$ corresponding to node $i$
and $f(x \, | \mbox{ reduced})$, or 
$f(x^*)$, is the same marginal but with the observed $x$ 
rescaled to $x^* = 1/(1-\delta)$ to undo the reduction and thus 
maintain consistency with the traveler's subjective distribution.

When the traveler arrives at an unvisited node, she observes a cost 
and a payoff. 
She compares the payoff to her mixture-$\MVT$ distribution for the 
payoff. 
If the discrepancy is large, she will suspect that the opponent 
intervened. 
Depending on whether the putative manipulation aligns better with a 
Type 0 or a Type 1 opponent, 
the traveler updates her belief about $\pi$. 
Specifically, if the apparent sabotage is consistent with a 
nonstrategic change, 
the traveler increases her posterior belief that her opponent is 
Type~0; if it aligns better with the second-best reduction, 
she increases her posterior belief on Type~1.

Let $\gamma$ denote the traveler’s updated belief that the observed 
discrepancy results from adversarial reduction of the node payoff.
The calculation is straightforward but entails consideration of 
several cases, and so 
the derivation is relegated to the Appendix, which shows
\begin{equation} 
\label{gamma}
\resizebox{1.0\columnwidth}{!}{$\displaystyle
\gamma
= \frac{\left[f(x^*)q_{1i} + f(x)(1 - q_{1i})\right] \pi}{f(x^*)
(\pi q_{1i}
+ (1 - \pi)q_{2i}) 
+ f(x)(\pi (1 - q_{1i}) + (1- \pi)(1 - q_{2i}))}.$}
\end{equation}
Upon observing the payoff, she updates her Beta prior from
$\Beta(\alpha,\beta)$ to
\[
  \Beta\bigl(\alpha + \gamma,\; \beta + 1 - \gamma\bigr).
\]

At this point, the traveler's decision is straightforward.
In order to maximize her expected utility for a one-step search,
she calculates the expected value of traveling to each neighboring 
node, where the expected value incorporates the possibility that
the opponent has reduced the payoff.
Similarly, for longer search paths, she chooses the next 
step so as to maximize her expected net value over that path.

\subsection{Fixed-Path vs.\ Adaptive Choice}
We have described an adaptive strategy for the traveler in which she 
learns as she observes costs and payoffs.
Alternatively, one might consider a fixed strategy, in which she 
chooses her entire path at the outset.
Under a mild condition, we show that the adaptive strategy 
is at least as good as the fixed strategy.
The proof is in the Appendix.

\begin{theorem}
\label{thm:expectations}
The traveler may choose a fixed path $\eta_{\mathrm{F}}$ 
at the outset, or adaptively select a path $\eta_{\mathrm{A}}$ 
based on observations made over time. 
Let $R_{\mathrm{F}}$ and $R_{\mathrm{A}}$ denote the net rewards
under each strategy. 
For any subjective distribution over the node rewards and
edge costs,
\[
\mathbb{E}[R_{\mathrm{A}}] \ge \mathbb{E}[R_{\mathrm{F}}].
\]
\end{theorem}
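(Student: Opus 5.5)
The approach is the standard ``value of information'' argument: every fixed path is itself an admissible adaptive policy, so the optimal adaptive policy can do no worse. I would formalize this in the following steps.

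\textbf{Step 1: the policy classes.} Fix the traveler's subjective distribution $P$ over the vector $\theta$ of all node payoffs and edge costs. By Theorems~\ref{thm:NIW-conditioning} and~\ref{thm:MVT-marginal}, or their mixture versions in the adversarial case, every conditional expectation used below is well defined. Assume $\mathbb{E}|R|<\infty$ for every admissible path. This is the ``mild condition'': it holds because the graph is finite, so there are finitely many paths, each with finitely many summands, and every $\MVT$ marginal with more than one degree of freedom has a finite mean.

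A fixed strategy is a deterministic path $\eta$ (including the option to stop), chosen before any observation. An adaptive strategy is a sequence of decision rules $d_t$, where $d_t$ is measurable with respect to the $\sigma$-field $\mathcal{F}_t$ generated by the costs and payoffs observed up to step $t$; $d_t$ selects the next edge or stops.

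\textbf{Step 2: embedding.} Each fixed path $\eta$ is the adaptive rule that ignores the data, so $\mathcal{F}_0$-measurable rules are a subset of adaptive rules. Hence
\[
\sup_{\text{adaptive}} \mathbb{E}[R] \ \ge\ \max_{\eta} \mathbb{E}[R(\eta)] = \mathbb{E}[R_{\mathrm{F}}],
\]
where $\eta_{\mathrm{F}}$ denotes the best fixed path. Any other fixed path only makes the inequality easier. Interpreting $\eta_{\mathrm{A}}$ as the Bayes-optimal adaptive policy then gives the theorem.

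\textbf{Step 3: backward induction.} The preceding step is the whole logic, but I expect the theorem is meant for the adaptive traveler who is optimal at each stage. To match that reading, I would prove the inequality by backward induction on the remaining horizon $n$. The horizon is finite because payoffs are collected only at first visits and the graph is finite. Let $V_n(\mathcal{F}_t)$ be the optimal adaptive value-to-go. At each step I would use the tower property together with $\mathbb{E}[\max_a Y_a \mid \mathcal{G}] \ge \max_a \mathbb{E}[Y_a\mid\mathcal{G}]$, which is Jensen's inequality for the convex function $\max$. Together these show that $V_n(\mathcal{F}_t)$ dominates the conditional value of continuing along the remaining portion of $\eta_{\mathrm{F}}$. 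Taking expectations at $t=0$ yields $\mathbb{E}[R_{\mathrm{A}}]\ge\mathbb{E}[R_{\mathrm{F}}]$.

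\textbf{Main obstacle.} The delicate point is the adversary. Its reductions depend on the traveler's position, so the reward of a path is not a fixed function of $\theta$ alone. I would handle this by enlarging the state to include the opponent's type and $\theta$. Given this state, the opponent's action is a deterministic function of the traveler's history, so $R$ remains a measurable function of the state and the policy. The embedding argument is then unaffected, provided the traveler's beliefs about the opponent, the Beta prior on $\pi$, are part of the same joint subjective distribution.

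A second caveat concerns the heuristic Uncertainty Policy, which truncates lookahead and therefore need not be Bayes-optimal. I would state explicitly that $R_{\mathrm{A}}$ refers to the optimal adaptive rule; the inequality holds for it by the argument above.
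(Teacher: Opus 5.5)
Your proof is correct under the reading you make explicit: $R_{\mathrm{A}}$ is the reward of a Bayes-optimal adaptive rule, and all path rewards are integrable. It is organized differently from the paper's. The paper never embeds fixed paths into the adaptive class. Instead it lets both travelers share the first traversal $\eta_1$, argues that after seeing $X_1$ the adaptive continuation $\eta_2(X_1)$ conditionally beats the planned remainder $\eta_2^0$, applies the tower property, and says ``repeat.''

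Your Step~3 is essentially a rigorous version of that induction, and your Step~2 makes it unnecessary. Once data-ignoring rules are admissible, $\sup_{\text{adaptive}}\mathbb{E}[R]\ge\mathbb{E}[R(\eta)]$ holds for every fixed $\eta$, with no common first step and no stepwise bookkeeping. You also handle the adversary by adding the opponent's type to the state. The paper's proof, written with $R(\eta)$ a fixed function of payoffs and costs, does not address this.

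The paper's route, if made rigorous, covers a weaker form of adaptivity. Making it rigorous means the right-hand side should be $\mathbb{E}[R(\eta_2^0)\mid X_1]$, and ``repeat'' must be a genuine induction. It then covers a traveler who at each node re-solves a full-length fixed-path problem given current information, compared against continuing the old plan (open-loop feedback). That traveler also satisfies the inequality, even though she never solves the closed-loop program behind your $V_n$.

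Neither argument supports the paper's claim that ``incremental heuristic search'' suffices. A truncated lookahead (myopic, $H$-path, or the Uncertainty Policy) need not dominate the remainder of $\eta_{\mathrm{F}}$, so your caveat is well taken. Note also that the paper's ``mild condition'' is this rationality assumption on the search, not integrability; your argument in fact uses both.

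One correction: the horizon is not finite merely because payoffs are collected only at first visits. Edge costs are paid at every crossing, and under a Gaussian or $\MVT$ model they can be negative. You should impose a step cap explicitly. The embedding inequality does not need it, but the existence of an optimal rule and your backward induction do.
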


Note that this result is true for any distribution, not just
the $\NIW$ family. 
This generality is important because the adaptive traveler
has a subjective distribution that is a mixture of 
$t$-distributions.

\section{Search and Simulation}
\label{search}

\begin{figure*}[!t]
  \centering
  \begin{minipage}[b]{0.28\textwidth}
    \includegraphics[width=\textwidth]{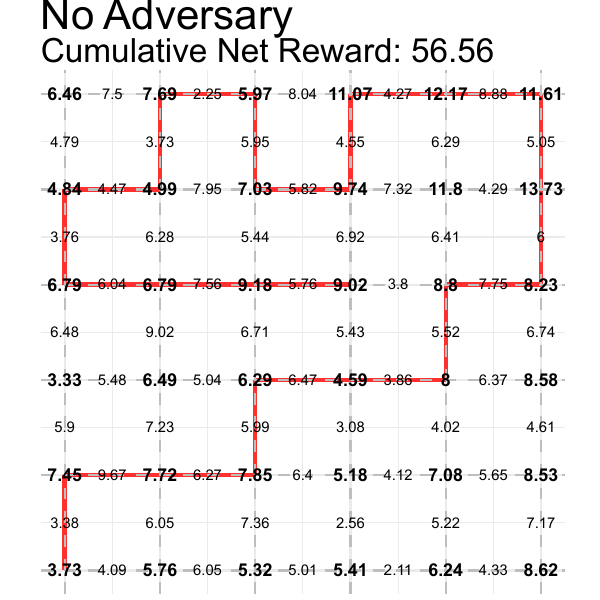}
  \end{minipage}
  \hfill
  \begin{minipage}[b]{0.28\textwidth}
    \includegraphics[width=\textwidth]{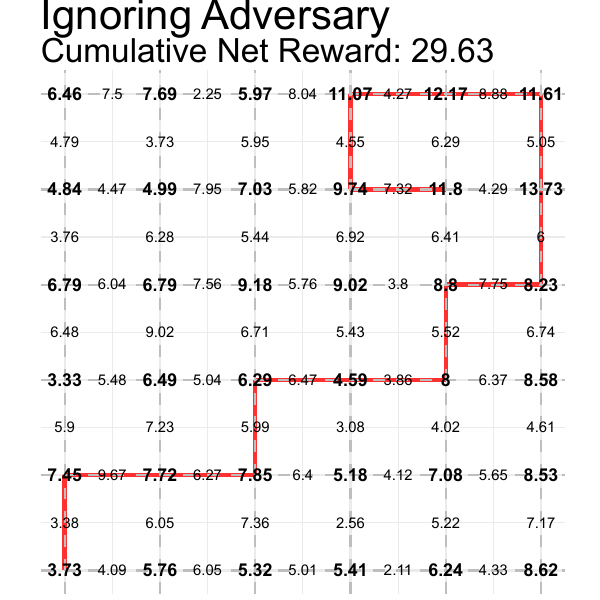}
  \end{minipage}
  \hfill
  \begin{minipage}[b]{0.28\textwidth}
    \includegraphics[width=\textwidth]{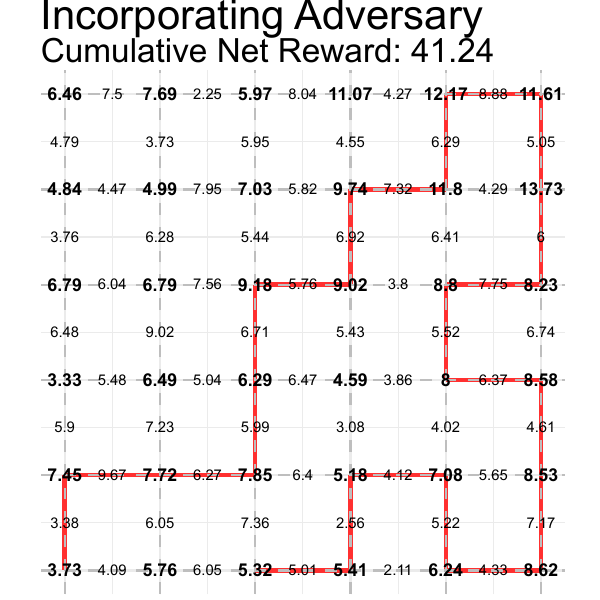}
  \end{minipage}
  \caption{\normalsize An illustration of the three situations (No Adversary, Ignoring
  Adversary, and Incorporating Adversary) in our 6-by-6 grid simulation.}
  \label{fig:grid}
\end{figure*}

\citet{caballero2025bayesian} does a simple simulation to compare four search 
strategies for network traversal in the absence of an adversary:  
myopic (greedy, or one-step look-ahead), an upper confidence 
bound policy, the $H$-path policy \citep{andreas2008branch}, 
and the speculating clairvoyant policy. 
It finds that the last two are the best and perform about equally well.

Our simulation adopts a factorial analysis of variance (ANOVA) to
study main effects and interactions between four factors:
search strategy, adversary type, the accuracy of the traveler's
subjective distribution, and the traveler's knowledge of
whether or not she has an opponent.
The response is the average net reward.
Using ANOVA to study factorial effects in a simulation is common
practice \citep[cf.][]{banks1988histospline,banks2003comparing},
often after making some transformation to normalize the response.
In this case, the response is an average so the Central Limit
Theorem obviates the need for transformation.

The search strategy factor has three levels---the benchmark myopic 
search, the $H$-path policy with $H=3$, as in \citet{caballero2025bayesian}, in which
one searches all possible paths of length 3, and our proposed
Uncertainty Policy search from Section~2.2.
The adversary type factor also has three levels---no adversary, the
Type 0 opponent, and the Type 1 opponent.
For the accuracy factor, we allow the traveler to be pessimistic,
and think that the mean payoffs are 10\% less than the truth,
or optimistic, and think that payoffs are 10\% greater than the 
truth, or to be accurate about the means.
The last factor is whether or not the traveler thinks she has
an opponent.

Our simulations use a 6-by-6 grid, which mimics the urban road 
network on which the convoy operates.
The nodes are associated with the Cartesian lattice
coordinates from (1, 1) to (6, 6), so each interior node has four neighbors
and the network has 36 nodes and 60 edges.
All payoffs and costs in the simulation have the multivariate Gaussian 
distribution.
The expected payoff for the node with coordinates $(x, y)$ is $x+y$, so
payoffs increase as one moves to the northeast.
The expected cost for an edge connecting nodes $(x_1, y_1)$ to $(x_2, y_2)$
is $(x_1 + y_1 + x_2 + y_2)/4$, so expected cost increases with the 
expected payoff.
The covariance matrix $\Sigma$ reflects spatial correlation decaying with
the length of the shortest path connecting the nodes or edges.
Specifically, let $\rho = 0.5$ be the base correlation parameter and 
define $d_{ij} = |x_i - x_j| + |y_i - y_j|$.
Then the covariance between two nodes at $(x_1, y_1)$ and $(x_2, y_2)$ 
is $\rho^{\,2d_{12}-1}$.
For two edges, one connecting $(x_1, y_1)$ to $(x_2, y_2)$ and the 
other connecting $(x_3, y_3)$ to $(x_4, y_4)$, 
their covariance equals $\rho$ if they share a node,
otherwise it is $\rho^{2d+1}$, where 
$d = \min\{d_{13},d_{14},d_{23},d_{24}\}$.
The covariance between a node at $(x_1, y_1)$ and an edge between 
$(x_2, y_2)$ and $(x_3, y_3)$ is
$\rho$ if the node is one endpoint of the edge, 
and $\rho^{2d}$ otherwise, where $d =\min\{d_{12},d_{13}\}$.

In our simulations, the traveler believes the payoffs and costs
follow a 
$\NIW_p(\vect{\mu}_0, \vect{\Sigma}_0, \vect{\Psi}_0, \nu_0)$
distribution.
We consider three cases: (1) an accurate prior with mean equal to 
the true expected values, (2) a pessimistic prior that
underestimates all expected values by 10\%, and (3) an optimistic
prior that overestimates all means by 10\%.
The prior covariance in the pessimistic, optimistic and accurate
cases equals the $\vect{\Sigma}_0$ in the generative model.

For all cases, the degrees of freedom is $\nu_0 = p+4$, which 
exceeds the minimum proper threshold $p+1$, ensuring a weakly
informative subjective distribution that is proper and which allows
flexible adaptation as costs and rewards are observed.
The $\NIW$ scale matrix $\vect{\Psi}_0$ is chosen so that
$\vect{\Sigma}_0 = \vect{\Psi}_0/(\nu_0 - p -1)$, making 
$\vect{\Sigma}_0$ the expectation of the inverse Wishart distribution.
The traveler's prior on the probability that her opponent is Type 0 
is Beta(1, 1).

We generate 100 networks, with costs and payoffs, as draws from
$N(\vect{\mu}, \vect{\Sigma})$, where $\vect{\mu}$ and $\vect{\Sigma}$
are realizations from the specified prior.
To reduce variance in comparisons, the same 100 networks are used
for all levels of the factors in the ANOVA.
For each network we have a Type 0 and Type 1 opponent, we
have the accurate, pessimistic, and optimistic traveler, and
we allow the traveler to account for, or not account for, her
opponent.
We also use the three search strategies for each of the 100 networks,
and compare the average cumulative rewards when the search terminates 
in an ANOVA.

Figure~\ref{fig:grid} shows three situations, which illustrate the 
simulation results.
The left-hand path shows the case in which there is no adversary, and the 
traveler myopically (one-step look-ahead) chooses the next step.
The middle path is the case in which a Type 0 adversary is present, but 
the traveler does not account for him and uses myopic search.
The right-hand path represents the case in which the traveler has a 
uniform prior on the adversary being Type 0, and she makes her myopic 
best choice.
When there is no adversary, for this realization of costs and payoffs, the 
cumulative reward is 56.56.
Believing that there is no adversary gives a bad outcome---the reward 
is 29.63.
That is substantially improved by taking account of the adversary, in 
which case the reward is 41.24.

\begin{table*}[!ht]
    \centering
    \caption{ANOVA summary for the fully crossed model predicting traveler net 
    reward. Main effects and higher‐order interactions among policy, 
    belief, adversary type, and strategy are shown with degrees of freedom 
    (df), sum of squares, mean squares, $F$‐statistics, and $p$‐values. 
    Significant terms $(p < .05)$ are in \textbf{bold}.\label{table:anova}}
    {
    \begin{tabular}{lrrrrr}
    \toprule
    Term & {df} & {Sum Sq} & {Mean Sq} & {F} & {p} \\ 
     \midrule
    \textbf{policy} & 2.00 & 44333.57 & 22166.79 & 170.90 & 0.00 \\ 
    \textbf{belief} & 2.00 & 1629.39 & 814.70 & 6.28 & 0.00 \\ 
    \textbf{adversary} & 2.00 & 1476595.75 & 738297.88 & 5692.18 & 0.00 \\ 
    \textbf{strategy} & 1.00 & 830.59 & 830.59 & 6.40 & 0.01 \\ 
    \midrule 
    policy:belief & 4.00 & 34.99 & 8.75 & 0.07 & 0.99 \\ 
    \textbf{policy:adversary} & 4.00 & 10049.38 & 2512.35 & 19.37 & 0.00 
    \\ 
    \textbf{belief:adversary} & 4.00 & 2079.67 & 519.92 & 4.01 & 0.00 \\ 
    policy:strategy & 2.00 & 21.73 & 10.87 & 0.08 & 0.92 \\ 
    belief:strategy & 2.00 & 208.88 & 104.44 & 0.81 & 0.45 \\ 
    \textbf{adversary:strategy} & 2.00 & 18512.49 & 9256.24 & 71.36 & 0.00 
    \\ 
    \midrule
    policy:belief:adversary & 8.00 &  207.63 & 25.95 & 0.20 & 0.99 \\ 
    policy:belief:strategy & 4.00 & 241.88 & 60.47 & 0.47 & 0.76 \\ 
    policy:adversary:strategy & 4.00 & 1061.57 & 265.39 & 2.05 & 0.09 \\ 
    belief:adversary:strategy & 4.00 & 157.10 & 39.27 & 0.30 & 0.88 \\ 
    \midrule
    policy:belief:adversary:strategy & 8.00 & 478.42 & 59.80 & 0.46 & 0.88 
    \\ 
    \midrule
    Residuals & 5346.00 & 693396.52 & 129.70 &  &  \\ 
    \bottomrule
    \end{tabular}
    }
\end{table*}

\begin{figure*}[!ht]
    \centering
    \includegraphics[width=0.85\linewidth]{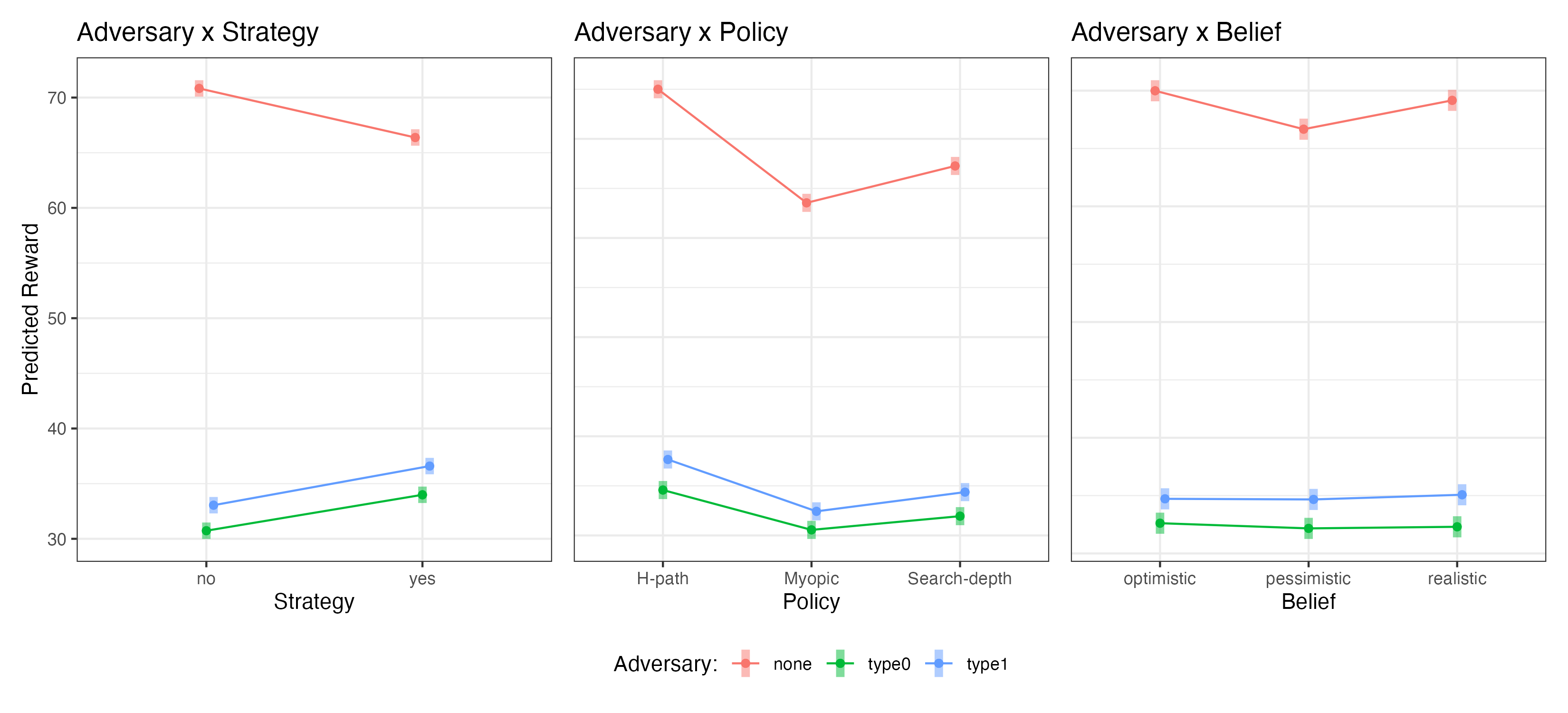}
    \caption{Three significant interaction plots from the fully crossed 
    ANOVA model showing how adversary type interacts with decision 
    strategy, traversal policy, and belief model in predicting average net 
    reward.}
    \label{fig:anova}
\end{figure*}

\begin{figure}[!ht]
    \centering
    \includegraphics[width=0.8\linewidth]{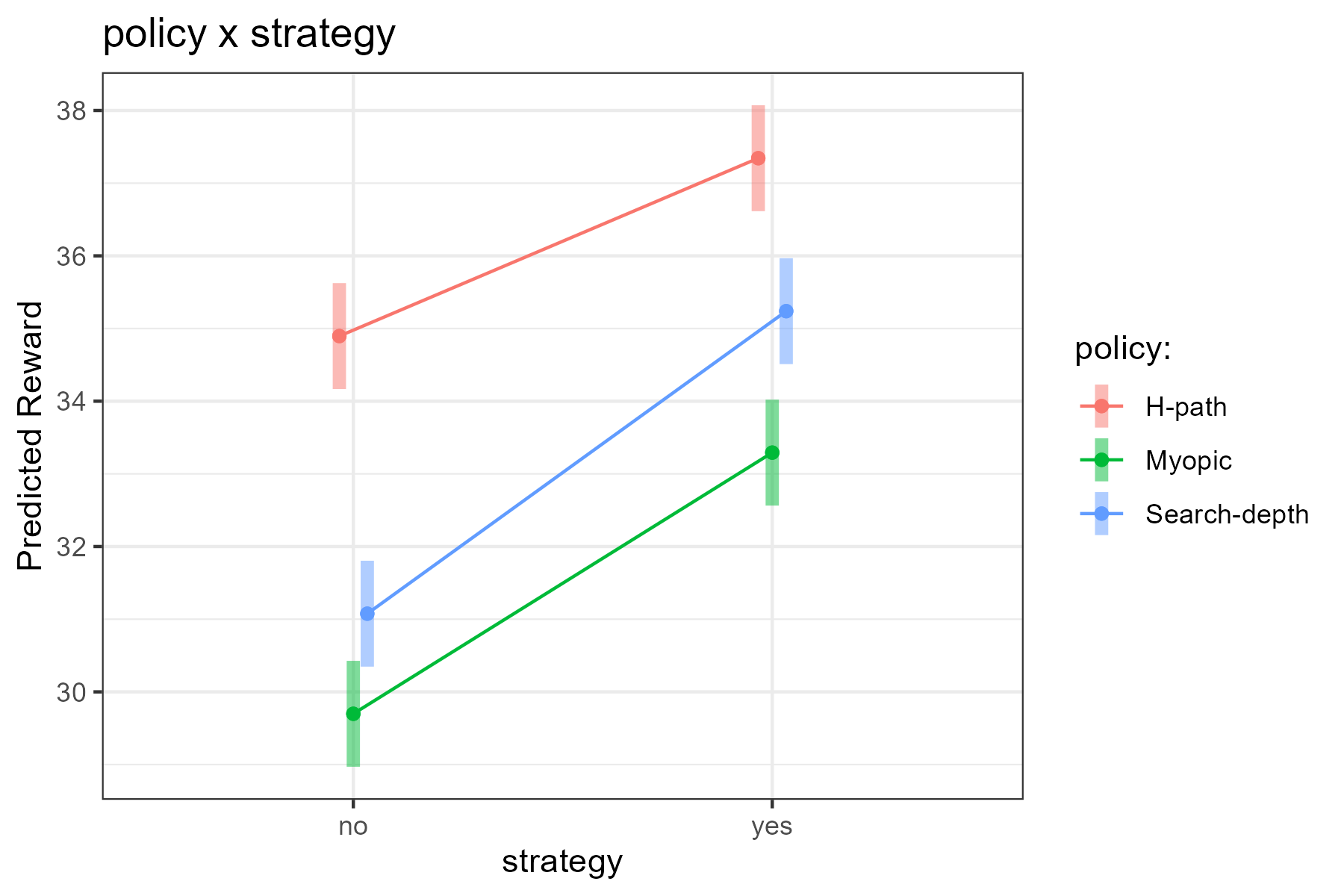}
    \caption{Significant policy-strategy interaction from the ANOVA model 
    using adversary-present runs, illustrating the different reward gains 
    for the Uncertainty Policy heuristic, the $H$-path, and the myopic policy.}
    \label{fig:anova_subset}
\end{figure}

\begin{figure}[!ht]
    \centering
    \includegraphics[width=0.8\linewidth]{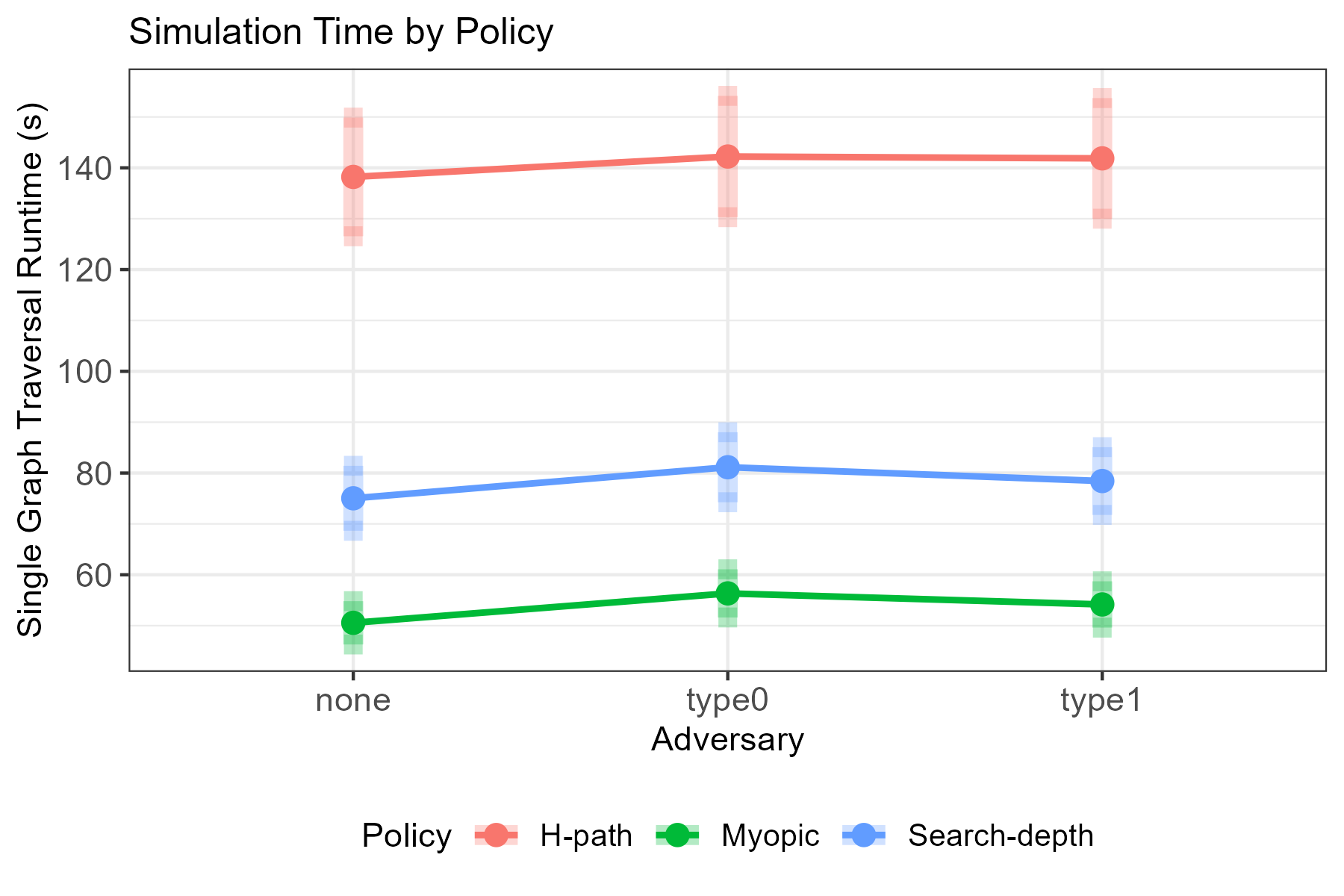}
    \caption{Simulation time comparison across adversarial type, 
    illustrating computational efficiency differences between the Uncertainty Policy, the $H$-path, and the myopic policy.}
    \label{fig:runtime}
\end{figure}

Table~\ref{table:anova} summarizes the four-factor ANOVA, with 
significant main effects and interactions in bold. 
Figure~\ref{fig:anova} explores the significant two-way interactions, 
showing the traveler's predicted net rewards under our model 
broken out by decision strategy, traversal policy, and belief, 
for both types of adversary. 

Adversarial presence has a strong main effect: in the absence of an
adversary, the expected rewards are about 70, but collapse to 
the 30-40 range when an adversary is present. 
The predicted reward is higher under a Type 1 adversary than a Type 0
adversary in all cases, which is to be expected since 
the Type 1 adversary penalizes the second-best rather than the best move,
so rewards are slightly higher. 

The first panel shows the impact of the traveler's knowledge of the
adversary. 
When no adversary is present, falsely assuming one is associated 
with nearly a 5-point loss in average predicted reward, while 
accounting for an adversary when one is present is associated with a
comparable gain. 
Travelers with correct knowledge tend to outscore those 
that are misinformed.   

The second panel compares traversal policies under various types 
of opponent. 
While all policies suffer under adversarial interference, the 
performance gap between our Uncertainty Policy heuristic and the $H$-path 
policy narrows. 
In all cases, the Uncertainty Policy outperforms the myopic baseline. 
Our approach reduces the computational burden associated with the 
$H$-path policy, as seen in Figure \ref{fig:runtime} which compares the
time taken to complete a single run under each policy, across 100
replicates.

The third panel shows the impact of the traveler's beliefs under 
different adversary types. 
In the presence of an adversary, there is virtually no difference 
in the average predicted rewards across beliefs.
However, in the absence of an adversary, pessimistic travelers 
appear to perform worse than optimistic or realistic travelers.

Taken together, these panels show that adversary presence has a strong 
association with expected net reward, and our proposed policy delivers
robust performance across adversarial types, narrowing the gap with the
$H$-Path policy while dramatically reducing computational cost and
outperforming the myopic policy in all cases.

We fit a supplementary ANOVA restricted to cases in which an adversary 
is present, and the significant policy-strategy interaction is shown in 
Fig.~\ref{fig:anova_subset}. 
Incorporating belief-updating boosts predicted rewards for every 
policy, but the increase is most pronounced for the Uncertainty Policy,
indicating that our proposed policy pairs especially well with 
adversary learning.

\section{Conclusion}\label{sec:conclusion}

This paper studies an agent's sequential decision making when there is an
opponent who can lower the expected payoff from the agent's choice.
This is a broad class of problems, and versions arise in corporate 
competition and certain strategic games.
It is helpfully concrete to represent this sequential decision problem
under uncertainty as traversing a graph with unknown payoffs and costs,
and the running example in this paper is convoy routing during an
urban insurgency.

There are many ways this general problem framework could be built out.
We use the simplest version that fully illustrates the dynamics of the
interplay between the agent's subjective distribution, and the type of 
adversary, and the optimization policy.
But one could have more complex situations in which the opponent
does not have perfect knowledge of the payoffs and costs, holds a 
different subject distribution than the traveler, and must learn
by conditioning as she does.
That case opens the door to the mirror equilibrium adversarial risk
analysis \citep{banks2015adversarial}, instead of the level-$k$
thinking school of \citet{stahl1995players}.

Our ANOVA results show the relative magnitude of four different
factors in our designed experiment, emphasizing the advantage
the traveler receives from using adversarial risk analysis.
Her prior on the type of her opponent leads to a novel
mixture model of multivariate $t$-distributions for her posterior.
And our results significantly extend the work of 
\citet{caballero2025bayesian} and \citet{wang2011network}.

\section*{Appendix}
The traveler needs two theorems to decide upon her next traversal. 
Both are quite obvious, but we could find no proof of the first in
easily available literature, and the second is stated but not 
proved in \citet{bishop2006pattern}.
For completeness, we include the derivations.

\vspace{.25cm}

\noindent
Theorem~\ref{thm:NIW-conditioning}.
The Normal-Inverse-Wishart distribution is closed under conditioning.

\noindent
\emph{Proof.}  
Let $\mathbf{X}$ be a $p$-dimensional random vector with Normal-Inverse-Wishart distribution 
$\NIW_p(\boldsymbol{\mu}, \boldsymbol{\Lambda}, \boldsymbol{\Psi}, \nu)$, 
where $\boldsymbol{\mu}$ is the mean vector, $\boldsymbol{\Lambda}$ is the precision matrix,
$\boldsymbol{\Psi}$ is the scale matrix and $\nu$ is the degrees of freedom.
Let $\boldsymbol{\Sigma}$ be its associated covariance matrix (with precision $\boldsymbol{\Lambda} = \boldsymbol{\Sigma}^{-1}$).  
By definition, the joint density of $\mathbf{X}$ factors as:
$$
f(\mathbf{x}|\boldsymbol{\mu}, \boldsymbol{\Lambda}, \boldsymbol{\Psi}, \nu) = \mathcal{N}_p(\mathbf{x}
|\boldsymbol{\mu}, \boldsymbol{\Lambda}) \mathcal{IW}_p(\boldsymbol{\Lambda}| \boldsymbol{\Psi}, \nu)
$$
where $\mathcal{N}_p(\mathbf{x} |\boldsymbol{\mu}, \boldsymbol{\Lambda})$ is a multivariate normal density 
and $\mathcal{IW}_p(\boldsymbol{\Lambda}| \boldsymbol{\Psi}, \nu)$ is an inverse-Wishart density.

We want to find the conditional density of $\mathbf{X}_{k+1:p}$ given $\mathbf{X}_{1:k}$. 
Partition $\mathbf{X}$ into $\mathbf{X}_{1:k}$ and $\mathbf{X}_{k+1:p}$,
and similarly partition $\boldsymbol{\mu}$, $\boldsymbol{\Lambda}$, and $\boldsymbol{\Psi}$.  
So, the joint density of $\mathbf{X}$ can be decomposed as
$f(\mathbf{x}) = f(\mathbf{x}_{1:k}, \mathbf{x}_{k+1:p})$.
Partitioning the mean vector and precision matrix and covariance yields:
$$
\boldsymbol{\mu} = \begin{pmatrix}
\boldsymbol{\mu}_1 \\
\boldsymbol{\mu}_2
\end{pmatrix}, \quad \boldsymbol{\Lambda} = \begin{pmatrix}
\boldsymbol{\Lambda}_{11} & \boldsymbol{\Lambda}_{12} \\
\boldsymbol{\Lambda}_{21} & \boldsymbol{\Lambda}_{22}
\end{pmatrix}
$$
where $\boldsymbol{\mu}_1$ and $\boldsymbol{\Lambda}_{11}$ correspond to the first $k$ components 
and $\boldsymbol{\mu}_2$ and $\boldsymbol{\Lambda}_{22}$ correspond to the remaining $p-k$
components.
Since $\boldsymbol{\Lambda}$ is symmetric (it is the inverse of a covariance matrix), the off–diagonal blocks satisfy
$
\boldsymbol{\Lambda}_{21} = \boldsymbol{\Lambda}_{12}^\top.
$
Here $\boldsymbol{\Lambda}_{12} \in \mathbb{R}^{k\times (p-k)}$ collects the cross–precision terms 
that couple the first $k$ variables with the remaining $p-k$ variables.
The covariance matrix $\boldsymbol{\Sigma}$ is similarly partitioned.
Standard multivariate normal theory implies that
the conditional distribution of $\mathbf{x}_{k+1:p}$ 
given $\mathbf{x}_{1:k}$ is
\[
  \mathbf{X}_{k+1:p} \,\big\vert\, \mathbf{X}_{1:k},\, \boldsymbol{\Lambda} 
  \;\sim\; \mathcal{N}_{p-k}\!\bigl(\boldsymbol{\mu}_{2\mid1},\, \boldsymbol{\Sigma}_{22|1} \bigr),
\]
where
$
\boldsymbol{\mu}_{2|1} = \boldsymbol{\mu}_2 + \boldsymbol{\Lambda}_{21} \boldsymbol{\Lambda}_{11}^{-1} 
(\mathbf{x}_{1:k} - \boldsymbol{\mu}_1)$
 and 
$\boldsymbol{\Sigma}_{22|1} = \boldsymbol{\Lambda}_{22}^{-1} - \boldsymbol{\Lambda}_{21} 
\boldsymbol{\Lambda}_{11}^{-1} \boldsymbol{\Sigma}_{11|1} \boldsymbol{\Lambda}_{11}^{-1} \boldsymbol{\Lambda}_{12}.
$
Using the Schur complements \citep[see, e.g.,][]{mardia1979multivariate} gives
$$
\boldsymbol{\Sigma}_{22|1} = \boldsymbol{\Lambda}_{22}^{-1} - \boldsymbol{\Lambda}_{21} 
\boldsymbol{\Lambda}_{11}^{-1} \boldsymbol{\Lambda}_{12}.
$$

Next, we note that $\boldsymbol{\Lambda}$ itself is distributed as $\mathcal{IW}_p(\boldsymbol{\Psi}, \nu)$.  
By partitioning $\boldsymbol{\Lambda}$ and $\boldsymbol{\Psi}$ conformably, one can show that
\[
  \boldsymbol{\Lambda}_{22\mid1}
  \sim \mathcal{IW}_{p-k}\!\bigl(\boldsymbol{\Psi}_{22\mid1},\, \nu - k\bigr),
\]
where $\boldsymbol{\Psi}_{22|1} = \boldsymbol{\Psi}_{22} - \boldsymbol{\Psi}_{21} \boldsymbol{\Psi}_{11}^{-1} \boldsymbol{\Psi}_{12}$
and $\nu - k$ are the reduced dimensional Inverse-Wishart parameters after conditioning on $\mathbf{X}_{1:k}$.  
Combining these facts reveals that $\bigl(\mathbf{x}_{k+1:p} | \mathbf{x}_{1:k} , \boldsymbol{\Lambda}_{22\mid1}\bigr)$ 
follows an $\NIW_{p-k}$ distribution with suitably updated parameters
$\bigl(\boldsymbol{\mu}_{2\mid1},\, \boldsymbol{\Lambda}_{22\mid1},\, \boldsymbol{\Psi}_{22\mid1},\, \nu - k\bigr)$.  
Hence,
\[
\bigl(\mathbf{x}_{k+1:p} | \mathbf{x}_{1:k} , \boldsymbol{\Lambda}_{22\mid1}\bigr)
  \sim
  \NIW_{p-k}\bigl(\boldsymbol{\mu}_{2\mid1},\, \boldsymbol{\Lambda}_{22\mid1},\, \boldsymbol{\Psi}_{22\mid1},\, \nu - k\bigr).
  \;\ \square
\]

\medskip

\noindent
Theorem 2. Integrating out the precision matrix in the $\NIW$ 
distribution gives the multivariate $t$-distribution.

\noindent
\emph{Proof:}  Let $X$ have $\NIW$ distribution, so
\[
\begin{aligned}
    \mathbf{x} \mid \boldsymbol{\mu}, \boldsymbol{\Sigma} &\sim \mathcal{N}(\boldsymbol{\mu}, \boldsymbol{\Sigma}), \\
    \boldsymbol{\Sigma} \mid \boldsymbol{\Psi}, \nu &\sim \mathcal{W}^{-1}(\boldsymbol{\Psi}, \nu),
\end{aligned}
\]
where $\boldsymbol{\Psi}$ is the Inverse-Wishart scale matrix, and $\nu$ is the degrees of freedom.
Let $\mbox{etr}(\mathbf{A}) = \exp[\mbox{tr}(\mathbf{A})]$.
Then integrating out $\boldsymbol{\Sigma}$ gives the multivariate $t$-distribution ($\MVT$) since
\[\resizebox{1.0\columnwidth}{!}{$\displaystyle
\begin{aligned}
    f\left(\mathbf{x} \mid \boldsymbol{\mu}, \boldsymbol{\Psi}, \nu\right)
    &= \int f\left(\mathbf{x} \mid \boldsymbol{\mu}, \boldsymbol{\Sigma}\right)
         \, f\left(\boldsymbol{\Sigma} \mid \boldsymbol{\Psi}, \nu\right) \, d\boldsymbol{\Sigma} \\
    &\propto \int \left|\boldsymbol{\Sigma}\right|^{-1/2}\exp\left\{-\frac{1}{2}(\mathbf{x} - \boldsymbol{\mu})^{\top}\boldsymbol{\Sigma}^{-1} (\mathbf{x} - \boldsymbol{\mu})\right\} \\
    &\quad \times \left|\boldsymbol{\Sigma}\right|^{-(\nu + p + 1)/2}\exp\left\{-\frac{1}{2}\mbox{tr}\left(\boldsymbol{\Psi}\boldsymbol{\Sigma}^{-1}\right)\right\} d\boldsymbol{\Sigma} \\
    &= \int \left|\boldsymbol{\Sigma}\right|^{-(\nu + p + 2)/2}\exp\left\{-\frac{1}{2}\mbox{tr}\Big[(\mathbf{x} - \boldsymbol{\mu})(\mathbf{x} - \boldsymbol{\mu})^{\top}\boldsymbol{\Sigma}^{-1}\Big]\right\} \\
    &\quad \times \exp\left\{-\frac{1}{2}\mbox{tr}\left(\boldsymbol{\Psi}\boldsymbol{\Sigma}^{-1}\right)\right\} d\boldsymbol{\Sigma} \\
    &= \int \left|\boldsymbol{\Sigma}\right|^{-(\nu + p + 2)/2} \etr\left\{-\frac{1}{2}\Big[(\mathbf{x} - \boldsymbol{\mu})(\mathbf{x} - \boldsymbol{\mu})^{\top} + \boldsymbol{\Psi}\Big]\boldsymbol{\Sigma}^{-1}\right\} d\boldsymbol{\Sigma}.
\end{aligned}$}
\]

Making the change of variable $\mathbf{S} = \boldsymbol{\Sigma}^{-1}$, so that
\[
    d\boldsymbol{\Sigma} = \left|\mathbf{S}\right|^{-(p+1)}\, d\mathbf{S},
\]
yields
\[\resizebox{1.0\columnwidth}{!}{$\displaystyle
\begin{aligned}
    f\left(\mathbf{x} \mid \boldsymbol{\mu}, \boldsymbol{\Psi}, \nu\right) 
    &\propto \int \left|\mathbf{S}\right|^{(\nu + p + 2)/2} \mbox{etr}\left\{-\frac{1}{2}\Big[(\mathbf{x} - \boldsymbol{\mu})(\mathbf{x} - \boldsymbol{\mu})^{\top} + \boldsymbol{\Psi}\Big]\mathbf{S}\right\} \\
    &\quad \times \left|\mathbf{S}\right|^{-(p + 1)} d\mathbf{S} \\
    &= \int \left|\mathbf{S}\right|^{(\nu - p)/2} \mbox{etr}\left\{-\frac{1}{2}\Big[(\mathbf{x} - \boldsymbol{\mu})(\mathbf{x} - \boldsymbol{\mu})^{\top} + \boldsymbol{\Psi}\Big]\mathbf{S}\right\} d\mathbf{S}.
\end{aligned}$}
\]

The integrand is a Wishart kernel in $\mathbf{S}$, with degrees of freedom $\nu + 1$ and scale matrix 
$\Big[(\mathbf{x} - \boldsymbol{\mu})(\mathbf{x} - \boldsymbol{\mu})^{\top} + \boldsymbol{\Psi}\Big]^{-1}$. 
The integral is proportional to the inverse of the corresponding normalizing constant, so that 
\[
\begin{aligned}
    f\left(\mathbf{x} \mid \boldsymbol{\mu}, \boldsymbol{\Psi}, \nu\right) 
    &\propto \left|(\mathbf{x} - \boldsymbol{\mu})(\mathbf{x} - \boldsymbol{\mu})^{\top} + \boldsymbol{\Psi}\right|^{-(\nu + 1)/2}.
\end{aligned}
\]
Applying the Matrix-Determinant Lemma \citep{harville1998matrix} yields
\[\resizebox{1.0\columnwidth}{!}{$\displaystyle
\begin{aligned}
    f\left(\mathbf{x} \mid \boldsymbol{\mu}, \boldsymbol{\Psi}, \nu\right) 
    &\propto \left|(\mathbf{x} - \boldsymbol{\mu})^{\top}\boldsymbol{\Psi}^{-1}(\mathbf{x} - \boldsymbol{\mu}) + 1\right|^{-(\nu + 1)/2} \\
    &= \left|\frac{1}{\nu + 1 - p}(\mathbf{x} - \boldsymbol{\mu})^{\top}\Big(\frac{\boldsymbol{\Psi}}{\nu + 1 - p}\Big)^{-1}(\mathbf{x} - \boldsymbol{\mu}) + 1\right|^{-(\nu + 1 - p + p)/2}.
\end{aligned}$}
\]

By making the change of variables,
\[
    \nu' = \nu - p + 1 \quad \text{and} \quad \boldsymbol{\Psi}' = \frac{\boldsymbol{\Psi}}{\nu-p+1},
\]
we get
\[
\begin{aligned}
    f\left(\mathbf{x} \mid \boldsymbol{\mu}, \boldsymbol{\Psi}, \nu\right) 
    &\propto \left|\frac{1}{\nu'}(\mathbf{x} - \boldsymbol{\mu})^{\top}{\boldsymbol{\Psi}'}^{-1}(\mathbf{x} - \boldsymbol{\mu}) + 1\right|^{-(\nu' + p)/2},
\end{aligned}
\]
which is the kernel of a $t_{\nu'}\left(\boldsymbol{\mu}, \boldsymbol{\Psi}'\right)$ distribution.
\hspace{.5in} $\square$

\medskip

\vspace{.25cm}

\noindent
Theorem~\ref{thm:expectations}.
Under mild conditions, an adaptive traveler has higher expected
net reward than a traveler who chooses a fixed path at the outset.

\noindent
\emph{Proof.}  
Let $\boldsymbol{r} = (r_1, \ldots, r_N)^\top$ denote  
vector of random node rewards, and let 
$\boldsymbol{c} = (c_1, \ldots, c_M)^\top$ be the vector of 
random edge costs.
The net reward along a path $\eta$ is:
\[
R(\eta) = \sum_{v \in \eta} r_v - \sum_{e \in \eta} c_e,
\]
where $v$ indexes the nodes in $\eta$ and $e$ the edges in $\eta$.

We assume that the traveler either does exhaustive search
of the possible paths or uses incremental heuristic search
\citep{hart1968formal}.
(This mild assumption ensures that the traveler is not using a foolish
policy which does not attempt to increase her net reward.)

Let $\eta_1$ denote the first traversal, and let $X_1$ be
the first net reward.
After reaching the first node, the adaptive traveler chooses the
next traversal $\eta_2(X_1)$ so as to maximize her
expected net reward. 
And the fixed-path traveler proceeds to $\eta_2$ as planned. 

Let $X_2 = R(\eta_2(X_1))$ be the net reward for the adaptive 
continuation, and let $R_{\mathrm{F}} = X_1 + R(\eta_2^0)$.
Then
\[
\mathbb{E}[X_2 \mid X_1] \ge \mathbb{E}[R(\eta_2)],
\]
since the traveler uses a sensible (incremental heuristic)
policy to choose $\eta_2(X_1)$.  
The law of iterated expectations implies:
\[
\mathbb{E}[R_{\mathrm{A2}}] = \mathbb{E}[X_1 + \mathbb{E}[X_2 \mid X_1]] 
\ge \mathbb{E}[X_1 + \mathbb{E}[R(\eta_2)]] = \mathbb{E}[R_{\mathrm{F2}}],
\]
where $R_{\mathrm{A2}}$ is the adaptive-path reward after two
steps and $R_{\mathrm{F2}}$ is the fixed-path reward after two
steps.
Repeating this argument for all next steps completes the proof.
Note that the adaptive policy may terminate before or after 
the fixed-path policy. 
\hspace{2.85 in} $\square$

\medskip

\subsection*{Calculating the Probability $\gamma$}

The traveler is located at a node in a grid, and considers moving to 
one of up to four neighboring nodes. 
For each neighbor edge-node pair $i \in \{1, 2, 3, 4\}$, define 
(1) $\Delta_i \coloneq i^{th}\text{ node reward} - \text{edge cost}$, or
the net gain in moving to node $i$, and (2)
$\Delta_i^{(\delta)} \coloneq (1 - \delta) 
(i^{th} \text{ node reward}) - \text{edge cost}$, or the net gain in 
moving to node $i$ when the adversary reduced the node reward 
by a fixed percentage $\delta$ (in our case, 30\%).  

The traveler has a subjective mixture-$\MVT$ distribution over 
the true costs and
rewards, which is inherited by the linear transformation 
$\vect{\Delta} = (\Delta_1, \Delta_2, \Delta_3, \Delta_4)$. 
Denote the density of $\vect{\Delta}$ by $f$. 
The traveler can now compute
$$
\scalebox{1.0}{$\displaystyle
\begin{array}{rcl}
q_{1i} &\coloneq& \Pb\left(\Delta_i = \max_j \Delta_j\right), 
\mbox{ and} \\ 
q_{2i} &\coloneq& \sum \limits_{j \neq i} \Pb\left(\Delta_i^{(\delta)} > \Delta_j \mid \Delta_i = \max_k \Delta_k \right) \Pb\left(\Delta_i = \max_k \Delta_k\right) + \\ 
& &\Pb\left(\Delta_i > \Delta_j^{(\delta)} \mid \Delta_j = 
\max_k\Delta_k\right) \Pb\left(\Delta_j = \max_k \Delta_k\right).
\end{array}
$}
$$

These represent the probability that moving to node $i$ is the greedy 
choice (corresponding to the highest net gain), or would become the 
greedy choice if the adversary were to perturb the node with the largest
reward, respectively. 

After each step, the traveler observes up to one new node reward, 
$x$.
She uses this to update her probability $\pi$ that the adversary is Type 0.
Assume she begins with a $\text{Beta}(\alpha, \beta)$ prior over probability of the adversary being Type~0.
After observing reward $x$, the posterior mean probability of Type~0 is given by
\begin{align*}
    \gamma &= \Pb\left(\mbox{Type 0} \mid x\right) \\
           &= \frac{f(x \mid \text{Type 0}) \, \mathbb{E}[\pi]}{f(x \mid \text{Type 0}) \, \mathbb{E}[\pi] + f(x \mid \text{Type 1}) (1 - \mathbb{E}[\pi])},
\end{align*}
where $\mathbb{E}[\pi] = \alpha / (\alpha + \beta)$.

To compute the likelihoods in this setting,
we distinguish between observations arising from unreduced and reduced node rewards.
The traveler's subjective $\mathcal{MVT}$ distribution models the true (i.e., unreduced) reward and cost values.
Hence, when a node's reward has been reduced by a known fraction $\delta$,
the observed value must be rescaled (i.e. inflated) by the factor $1/(1 - \delta)$
to be evaluated under the subjective model.
Let $x^* = x /(1-\delta) $.

Based on whether the reward at a node has been reduced (due to sabotage) or remains intact, 
we consider two cases for each adversary type, corresponding to the traveler's observation likelihoods under each scenario.
Take the Type 0 adversary first. 
He always reduces the best node, which is node $i$ with probability 
$q_{1i}$. 
In that case, the conditional density of $x$ is 
$$
    f\left(x \, \mid \mbox{ Type 0}\right) 
    = q_{1i}f\left(x^*\right) + 
    \left(1 - q_{1i}\right)f\left(x\right).
$$

We now consider the Type 1 adversary, which is slightly more complicated. 
The Type 1 adversary reduces the node that has the second-best 
net payoff.
This is node $i$ with probability $q_{2i}$. 
Thus, 
$$
    f\left(x \, \mid \mbox{ Type 1}\right) 
    = q_{2i} f\left(x^*\right) + \left(1 - q_{2i}\right) f\left(x\right).
$$
Putting this together gives the result in \eqref{gamma}. 

\section*{Acknowledgments:}  Elvan Ceyhan and Li Zhou were supported by 
the Office of Naval Research Grant N00014-22-1-2572 and the NSF Award
\#2319157.
David Banks and Leah Johnson were supported by the Office of Naval 
Research Grant N00014-22-1-2572.

\bibliographystyle{plainnat}   
\bibliography{ref.bib}

\end{document}